\title{Rainbow fractional matchings}
\author{
  Ron Aharoni\thanks{Department of Mathematics, Technion -- Israel Institute of Technology, Technion City, Haifa 3200003, Israel}\footnotemark[1]${\ }^{,}$\footnote{Supported in part by the United States--Israel Binational Science Foundation (BSF) grant no.\ 2006099, the Israel Science Foundation (ISF) grant no. 2023464 and the Discount Bank Chair at the Technion. Email: {\tt ra@tx.technion.ac.il}.}
  \and Ron Holzman\footnotemark[1]${\ }^{,}$\thanks{Corresponding author. Supported in part by ISF grant no.\ 409/16. Email: {\tt holzman@technion.ac.il}.}
  \and Zilin Jiang\thanks{Department of Mathematics, Massachusetts Institute of Technology, Cambridge, MA 02139, USA. The work was done when Z.~Jiang was a postdoctoral fellow at Technion -- Israel Institute of Technology, and was supported in part by ISF grant nos.\ 409/16, 936/16. Email: {\tt zilinj@mit.edu}.}
}
\date{}
\newtheorem{theorem}{Theorem}[section]
\newtheorem{proposition}[theorem]{Proposition}
\theoremstyle{remark}
\newtheorem{claim}{Claim}
\begin{document}

\maketitle

\begin{abstract}
  We prove that any family $E_1, \ldots , E_{\lceil rn \rceil}$ of (not necessarily distinct) sets of edges in an $r$-uniform hypergraph, each having a fractional matching of size $n$, has a rainbow fractional matching of size $n$ (that is, a set of edges from distinct $E_i$'s which supports such a fractional matching). When the hypergraph is $r$-partite and $n$ is an integer, the number of sets needed goes down from $rn$ to $rn-r+1$. The problem solved here is a fractional version of the corresponding problem about rainbow matchings, which was solved by Drisko and by Aharoni and Berger in the case of bipartite graphs, but is open for general graphs as well as for $r$-partite hypergraphs with $r>2$. Our topological proof is based on a result of Kalai and Meshulam about a simplicial complex and a matroid on the same vertex set.
\end{abstract}

\noindent\textbf{Keywords:} Rainbow matching; Fractional version; Uniform hypergraph; Collapsible complex

\noindent\textbf{Mathematics Subject Classification:} 05D15; 55U10

\section{Introduction}

Given a family $M_1, \ldots , M_m$ of (not necessarily distinct) matchings in a graph $G$, a \emph{rainbow matching} of size $n$ is a matching $\{e_1, \ldots , e_n\}$ with $e_i \in M_{\sigma(i)}$, $i=1, \ldots , n$, so that $\sigma(1), \ldots , \sigma(n)$ are distinct. Drisko proved the following theorem (which he stated using Latin-rectangles terminology).

\begin{theorem}[Theorem~1 of Drisko~\cite{Dr}] \label{drisko}
  Let $G=K_{n,k}$ with $n \le k$. Any family of $2n-1$ matchings of size $n$ in $G$ has a rainbow matching of size $n$.
\end{theorem}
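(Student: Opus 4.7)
The plan is to argue by contradiction via a rainbow augmenting-walk argument. Assume toward a contradiction that the family $M_1,\dots,M_{2n-1}$ of size-$n$ matchings in $K_{n,k}$ admits no rainbow matching of size $n$, and let $F=\{e_1,\dots,e_s\}$ be a rainbow matching of maximum size with (after reindexing) $e_j\in M_j$ and $s\le n-1$. Set $U=\{s+1,\dots,2n-1\}$, so $|U|\ge n$. By the maximality of $F$, every edge of every $M_i$ with $i\in U$ must share at least one vertex with $F$; otherwise such an edge could be appended to $F$.

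To formalise the augmentation, I would build an arc-coloured digraph $D$ on vertex set $\{0,1,\dots,s\}$, the symbol $0$ standing for ``unsaturated.'' For each $i\in U$ and each $f=(x,y)\in M_i$ with $x\in L$, $y\in R$, add an arc coloured $i$ of type $j\to j'$ when $x=L(e_j)$ and $y=R(e_{j'})$, of type $0\to j'$ when $x\notin L(F)$ and $y=R(e_{j'})$, and of type $j\to 0$ when $x=L(e_j)$ and $y\notin R(F)$. A closed walk $0\to j_1\to\cdots\to j_t\to 0$ whose $t+1$ arcs carry pairwise distinct colours from $U$ yields the contradiction: delete $e_{j_1},\dots,e_{j_t}$ from $F$ and insert the $t+1$ arc-edges to obtain a rainbow matching of size $s+1$.

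The main task is therefore to produce such a colour-distinct closed walk at $0$. A counting step, writing $a_i,b_i,c_i$ for the edges of $M_i$ meeting $F$ only on the left, only on the right, or on both sides, yields $a_i+b_i+c_i=n$ and $a_i+c_i,\,b_i+c_i\le s$, whence $a_i,b_i\ge n-s$. Summing over $U$ gives at least $|U|(n-s)$ arcs entering $0$ and at least $|U|(n-s)$ leaving $0$, while each colour $i$ contributes at most one out-arc and at most one in-arc at every interior vertex $j\in[s]$. The principal obstacle, which is exactly where the threshold $2n-1$ is sharp, is exploiting the strict abundance $|U|\ge n>s$ to guarantee that a walk starting from $0$ can be closed back at $0$ without reusing a colour. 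I would attack this by an iterative/greedy construction: at each step either an unused colour provides an arc back to $0$ (finishing the argument), or the lower bounds $a_i,b_i\ge n-s$ force an arc into a previously unvisited interior vertex, a process that must terminate because only $s<|U|$ interior vertices are available. This combinatorial kernel amounts to a Hall- or Menger-style reachability statement in the coloured digraph $D$, and is essentially a digraph reformulation of Drisko's original Latin-rectangle argument.
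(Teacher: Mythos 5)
Your reduction of the theorem to finding a colour-distinct closed walk at $0$ is sound (with the minor caveat that the interior vertices of the walk must be distinct, or the walk must first be shortcut to such a path, for the edge exchange to yield a matching), and the counts $a_i,b_i\ge n-s$ are correct. The genuine gap is in the step that is supposed to produce the walk. From an interior vertex $j$, each unused colour $i$ has exactly one out-arc, namely the one determined by the unique edge of $M_i$ at the left vertex of $e_j$ (here you are using that each $M_i$ saturates the side of size $n$), and nothing forces that arc to go to $0$ or to an unvisited vertex. The bounds $a_i,b_i\ge n-s$ count arcs incident to $0$ over the whole digraph, not arcs leaving the current endpoint of your path, so the assertion that ``the lower bounds force an arc into a previously unvisited interior vertex'' is unjustified: for example, every remaining colour may contain the edge $e_{j_m}$ itself, so that all their out-arcs at $j_m$ are loops, and the greedy walk is stuck. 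Backtracking does not obviously rescue it: each retreat discards colours without a mechanism turning the surplus $|U|\ge n>s$ into a contradiction, and since $2n-1$ is tight (the $C_{2n}$ example), any correct argument must exploit that surplus in a sharper, global way. A ``Hall- or Menger-style reachability statement'' cannot be invoked off the shelf, because what is needed is a rainbow (colour-distinct) walk rather than mere connectivity; supplying this lemma is essentially the entire content of Drisko's theorem, and it is exactly what your sketch leaves open.

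For comparison, the paper does not prove Theorem~\ref{drisko} by augmenting walks at all: it follows from the case $r=2$ of Theorem~\ref{partite} (equivalently Theorem~\ref{fracbip}, which by K\"onig's theorem is the same as Theorem~\ref{ab}), and that result is proved topologically, by showing that the complex of edge sets with fractional matching number below $n$ is collapsible and then applying the Kalai--Meshulam theorem to it together with a partition matroid. So even a completed version of your argument would be a genuinely different, purely combinatorial route, closer in spirit to Drisko's original proof and to the Aharoni--Berger argument than to anything in this paper.
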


Drisko applied his theorem to questions about complete mappings for group actions, and difference sets in groups. Later, Alon \cite{Al} pointed out connections to additive number theory, and showed in particular that Theorem~\ref{drisko} implies the well-known result of Erd\H{o}s, Ginzburg and Ziv \cite{EGZ}.

Aharoni and Berger re-formulated and re-proved Drisko's theorem, while removing the assumption that one side of the bipartite graph is of size $n$. Namely, they established the following.

\begin{theorem}[Theorem~4.1 of Aharoni and Berger~\cite{AB}] \label{ab}
  Any family of $2n-1$ matchings of size $n$ in a bipartite graph has a rainbow matching of size $n$.
\end{theorem}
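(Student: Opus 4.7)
My first attempt would be a reduction to Drisko's theorem (Theorem~\ref{drisko}) by embedding the given bipartite graph $G$ into some $K_{n,k}$ with $n \le k$. The difficulty is that Drisko requires each matching to saturate the $n$-side, whereas different $M_i$'s here generally saturate different $n$-subsets of $X$. Any attempt to identify these subsets via bijections $\phi_i \colon V(M_i) \cap X \to [n]$ can collapse two distinct $X$-endpoints of the putative rainbow matching into one and thereby break the matching property, so a direct reduction does not seem to work and a genuinely new argument is needed.

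I would instead argue directly via a maximum rainbow matching and exchange operations. Let $F = \{e_1, \ldots, e_k\}$ be a rainbow matching of maximum size, with $e_i \in M_{c_i}$, and assume toward contradiction that $k \le n - 1$. Then at least $2n - 1 - k \ge n$ matchings are unused, and by maximality every edge of each unused $M_j$ must meet $V(F)$. Since $V(F)$ has $k$ vertices on each side of the bipartition and $M_j$ has $n$ pairwise-disjoint edges, this already forces $k \ge n/2$ and tightly constrains how the edges of unused matchings are distributed around $V(F)$.

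The core of the proof is then a rainbow alternating-path construction. I would build an auxiliary digraph $D$ on $[2n-1]$ with an arc $a \to c_i$ whenever $M_a$ contains an edge whose only intersection with $V(F)$ lies in $V(e_i)$; such an arc encodes the single swap that replaces $e_i$ inside $F$ by that edge of $M_a$, leaving $c_i$ unused in the new rainbow matching. A directed walk in $D$ starting at an unused color and eventually reaching a matching that admits an edge disjoint from the updated $V(F)$ realizes an iterated swap producing a rainbow matching of size $k + 1$, contradicting maximality. A pigeonhole argument exploiting the surplus $(2n - 1 - k) \ge k + 1$ of unused colors over used ones should force such a walk to exist.

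The principal obstacle is making the swap argument rigorous: one must verify that composed swaps do not re-create vertex conflicts as $V(F)$ shifts, and this is precisely where bipartiteness enters essentially, by providing the parity that makes alternating paths work. The introduction notes that the analogue for general graphs is open, strong evidence that any valid proof must use the bipartition non-trivially. An alternative route, inspired by the topological flavor of the authors' own work on the fractional version, would be to establish a sufficient connectivity bound for the matching complex of a bipartite graph and then invoke a colorful Helly-type theorem; I would fall back on this if the direct swap argument became intractable.
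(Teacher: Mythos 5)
Your proposal is an outline, not a proof: the decisive step --- that a suitable directed walk in your auxiliary digraph $D$ exists and that the corresponding iterated swaps really produce a rainbow matching of size $k+1$ --- is precisely the hard part, and it is left as an assertion (``a pigeonhole argument \ldots should force such a walk to exist''), with the main difficulty (composed swaps re-creating vertex conflicts) explicitly acknowledged but not resolved. The gap is concrete: $D$ is defined relative to the \emph{fixed} set $V(F)$, but every swap changes $V(F)$ (an edge of $M_a$ meeting $V(e_i)$ in only one vertex frees one endpoint of $e_i$ and occupies a new vertex), so an arc that was legal before a swap need not encode a legal swap afterwards, and a walk in the static digraph does not automatically ``realize an iterated swap.'' Moreover, bipartiteness enters your argument only through an appeal to ``parity,'' with no actual mechanism; yet, as you yourself note, some essential use of the bipartition is mandatory, since the statement fails for general graphs. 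The dynamic bookkeeping of freed and occupied vertices, organized via alternating paths and an induction on $n$, is exactly the content of the original arguments of Drisko \cite{Dr} and Aharoni--Berger \cite{AB}, and none of it is supplied here. Your preliminary counting observations ($k\ge n/2$, at least $n$ unused colors) are correct but do not by themselves yield an augmentation.

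For comparison, this paper does not prove Theorem~\ref{ab} by exchanges at all: it observes that, by K\"onig's theorem, Theorem~\ref{ab} is equivalent to the fractional statement Theorem~\ref{fracbip}, which is the case $r=2$ of Theorem~\ref{partite}; that in turn is proved by showing (Theorem~\ref{weighted} and its $r$-partite refinement) that the complex of edge sets with fractional matching number less than $n$ is $(rn-r)$-collapsible, and then applying the Kalai--Meshulam theorem (Theorem~\ref{km}) to this complex together with the partition matroid recording the colors. Your fallback suggestion (a colorful Helly-type theorem applied to a suitably connected complex) is in the spirit of this route, but as stated it is only a direction, with neither the relevant complex identified nor the required collapsibility or connectivity bound established. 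Either route would need to be carried out in detail before this could count as a proof.
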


Drisko showed that the parameter $2n-1$ is best possible: Consider a cycle of length $2n$, and a family of $2n-2$ matchings consisting of $n-1$ copies of each of its two perfect matchings. He conjectured that this is the only extremal example, and Aharoni, Kotlar and Ziv~\cite{AKZ} proved this not only in Drisko's $K_{n,k}$ setting but in general bipartite graphs.

If the bipartiteness assumption is removed, the result of Theorem~\ref{ab} is no longer true. Indeed, Bar\'at, Gy\'arf\'as and S\'ark\"ozy \cite{BGS} showed that for even $n$, one can start as above with $n-1$ copies of each of the two perfect matchings in $C_{2n}$ (whose vertices we denote by $1, \ldots , 2n$ in cyclic order), and add one extra matching $\{1\,3, 2\,4, 5\,7, 6\,8, \ldots , (2n-3)\,(2n-1), (2n-2)\,2n\}$, still without having a rainbow matching of size $n$.

No examples with more than $2n-1$ matchings of size $n$ in arbitrary graphs are known which have no rainbow matching of size $n$. It may well be the case that $2n$ matchings suffice, perhaps already $2n-1$ are enough if $n$ is odd. However, the best known result for arbitrary graphs is the following one due to Aharoni et al.

\begin{theorem}[Theorem~1.9 of Aharoni et al.~\cite{ABCHS}] \label{abchs}
Any family of $3n-2$ matchings of size $n$ in an arbitrary graph has a rainbow matching of size $n$.
\end{theorem}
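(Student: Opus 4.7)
The plan is to argue by contradiction. Suppose $M_1, \ldots, M_{3n-2}$ are matchings of size $n$ in a graph $G$ admitting no rainbow matching of size $n$, and let $F = \{e_1, \ldots, e_k\}$, with $e_i \in M_i$ after reindexing, be a maximum rainbow matching; by assumption $k < n$. Any edge of an unused matching $M_i$ (with $i > k$) that is disjoint from $V(F)$ would extend $F$ to a rainbow matching of size $k+1$, contradicting maximality. Hence every edge of every unused matching meets $V(F)$, which has only $2k < 2n$ vertices. Since $M_i$ is itself a matching, at most $k$ of its edges lie entirely inside $V(F)$, so at least $n-k$ are \emph{pendant} at $V(F)$: of the form $\{u, v\}$ with $u \in V(F)$ and $v \notin V(F)$.

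The second step is to look for augmenting structures of the ``rainbow alternating walk'' type. Given a pendant edge $f = \{u, v\} \in M_i$ with $u$ lying in some $e_j \in F$, we can swap $e_j$ for $f$: the label $i$ enters the used set and $j$ leaves it, so $M_j$ is now available to be reused at the newly exposed endpoint $w$ of $e_j$. Iterating, one builds a walk in $G$ alternating between edges of $F$ and pendant edges drawn from distinct unused matchings, whose successful termination at an exposed vertex yields a rainbow matching of size $k+1$. In the bipartite case this is essentially Drisko's argument and $2n-1$ matchings suffice. The obstruction in a general graph is an \emph{odd} closed walk, which does not produce a valid augmentation.

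To break odd-cycle obstructions, I would construct an auxiliary labeled structure on $F$: for each pendant edge $\{u, v\}$ of $M_i$ with $u$ in $e_j$, place a ``token'' labeled $i$ at the port $(e_j, u)$. Each unused $M_i$ contributes at least $n-k$ tokens spread over $2k$ ports, giving at least $(3n-2-k)(n-k)$ labeled tokens in all. A double-pigeonhole on the $2k$ ports and the $3n-2-k$ labels should yield either (a) a pair of compatible pendant edges at different edges of $F$ producing a two-step augmentation directly, or (b) when (a) fails because of a short odd-cycle obstruction, a spare unused label that bypasses the parity obstruction via a detour through a fresh pendant. The extra $n-1$ matchings beyond Drisko's $2n-1$ are precisely what guarantees (b) whenever (a) fails.

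The hardest step will be the combinatorial bookkeeping around the odd-cycle case: unlike the bipartite setting, where every alternating rainbow walk of the right parity augments, here one must simultaneously track the trajectory of the walk in $G$ and the labels drawn from the pool of unused matchings, and show that the slack of $n-1$ extra matchings always suffices to exit any odd cycle through a fresh label. I would likely need an extremal choice of $F$ beyond mere maximality, for instance one minimizing the index set used lexicographically, in order to pin down which labels are available to re-enter the walk. I do not expect the Kalai--Meshulam topological framework used in this paper for the fractional version to yield $3n-2$ on the integral side, since that framework naturally produces bounds of order $2n$; the argument should therefore be purely combinatorial.
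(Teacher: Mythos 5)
First, a point of orientation: this statement is not proved in the paper at all --- it is quoted as Theorem~1.9 of Aharoni, Berger, Chudnovsky, Howard and Seymour \cite{ABCHS}, and the present paper's topological machinery is explicitly aimed at the \emph{fractional} analogues, not at this integral result. So there is no proof here to compare yours against; your proposal has to stand on its own, and as it stands it is a programme rather than a proof.

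The genuine gap is exactly at the step you yourself flag as ``the hardest'': the dichotomy ``(a) a two-step augmentation exists, or (b) a spare label bypasses the parity obstruction'' is never formulated precisely, let alone proved, and it is the entire content of the theorem --- everything before it (maximality of $F$, every edge of an unused matching meets $V(F)$, at least $n-k$ pendant edges per unused matching) is routine. Moreover, your counting is weakest precisely in the critical case $k=n-1$: there you have $3n-2-k=2n-1$ unused matchings contributing only $n-k=1$ guaranteed pendant edge each, i.e.\ $2n-1$ tokens on $2k=2n-2$ ports. Pigeonhole then gives only two pendant edges from distinct unused matchings attached at the \emph{same} port $(e_j,u)$, which does not augment anything (they may even share their outer endpoint); the configuration that does augment --- pendants from two distinct unused matchings at the two \emph{opposite} endpoints of one edge of $F$, with distinct outer endpoints --- is not forced by your count. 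Nothing in the sketch identifies where the constant $3n-2$ (rather than $2n$ or $2n-1$) actually enters, and the claim that ``the slack of $n-1$ extra matchings always suffices to exit any odd cycle through a fresh label'' is asserted, not argued; handling exactly this odd-component obstruction is the delicate part of the argument in \cite{ABCHS}. Until the case analysis behind (a)/(b) is made explicit and closed, the proposal does not establish the theorem.
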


In the absence of a tight result for matchings in arbitrary graphs, we are led to consider the fractional version of the problem. Recall that a \emph{fractional matching} for a set $E$ of edges is a function $f\colon E \to \mathbb{R}_+$ such that $\sum_{e \ni v} f(e) \le 1$ for every vertex $v$. The size of $f$ is $\sum_{e \in E} f(e)$. The fractional matching number $\nu^*(E)$ is the maximal size of a fractional matching for $E$. In bipartite graphs, by K\"onig's theorem, this number is equal to the matching number $\nu(E)$. In an arbitrary graph, $\nu^*(E)$ may be larger than $\nu(E)$, and is either an integer or a half-integer.

Given a family $E_1, \ldots , E_m$ of (not necessarily distinct) sets of edges in a graph $G$, a \emph{rainbow fractional matching} of size $n$ is a set of edges $\{e_1, \ldots , e_k\}$ with $e_i \in E_{\sigma(i)}$, $i=1, \ldots , k$, so that $\sigma(1), \ldots , \sigma(k)$ are distinct, together with a fractional matching $f\colon \{e_1, \ldots , e_k\} \to \mathbb{R}_+$ of size $n$.

Due to K\"onig's theorem, the following is equivalent to Theorem~\ref{ab}.

\begin{theorem} \label{fracbip}
  Let $n$ be a positive integer. Any family $E_1, \ldots , E_{2n-1}$ of sets of edges in a bipartite graph, such that $\nu^*(E_i) \ge n$ for $i=1, \ldots , 2n-1$, has a rainbow fractional matching of size $n$.
\end{theorem}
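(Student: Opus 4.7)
The plan is to derive Theorem~\ref{fracbip} directly from Theorem~\ref{ab} by appealing to König's theorem, which in bipartite graphs asserts $\nu^*(E) = \nu(E)$ for every edge set $E$. The paper itself signals this reduction by calling Theorem~\ref{fracbip} ``equivalent'' to Theorem~\ref{ab}, so what is required is only a translation between the integer and fractional formulations.

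Concretely, suppose $E_1, \ldots, E_{2n-1}$ are sets of edges in a bipartite graph $G$ with $\nu^*(E_i) \ge n$ for each $i$. König's theorem upgrades each hypothesis to $\nu(E_i) \ge n$, so inside each $E_i$ I can select an (integer) matching $M_i$ of size $n$. Applying Theorem~\ref{ab} to the family $M_1, \ldots, M_{2n-1}$ then yields a rainbow matching $\{e_1, \ldots, e_n\}$ with $e_i \in M_{\sigma(i)} \subseteq E_{\sigma(i)}$ and indices $\sigma(1), \ldots, \sigma(n)$ distinct. Assigning weight $1$ to each $e_i$ produces a fractional matching of size $n$ supported on $\{e_1, \ldots, e_n\}$, which is by definition a rainbow fractional matching of size $n$.

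For completeness --- and to justify the word ``equivalent'' --- the converse proceeds symmetrically: given matchings $M_1, \ldots, M_{2n-1}$ of size $n$, note that $\nu^*(M_i) \ge \nu(M_i) = n$, apply Theorem~\ref{fracbip} to obtain a rainbow edge set $\{e_1, \ldots, e_k\}$ supporting a size-$n$ fractional matching, and invoke König once more on this rainbow set to extract an integer matching of size $n$, which is automatically rainbow since the $\sigma(i)$ are distinct. There is no genuine obstacle in either direction; the substance of the paper must lie elsewhere, namely in extending the fractional rainbow statement to non-bipartite graphs and to $r$-uniform hypergraphs (via the promised Kalai--Meshulam topological machinery), settings in which the corresponding integer statements are either open or known only with worse bounds such as the $3n-2$ of Theorem~\ref{abchs}.
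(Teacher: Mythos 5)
Your proof is correct, but it is not the route the paper itself takes to establish Theorem~\ref{fracbip}. Within the paper, Theorem~\ref{fracbip} is obtained as the case $r=2$ of Theorem~\ref{partite}, which in turn is proved by the topological machinery (the weighted collapsibility result and the Kalai--Meshulam theorem); the whole point is that this yields a \emph{new}, self-contained proof of the Drisko/Aharoni--Berger theorem, independent of the integer-matching result. What you do instead is the translation that the paper only alludes to when it calls Theorems~\ref{ab} and~\ref{fracbip} ``equivalent'': you use K\"onig's theorem ($\nu^*=\nu$ in bipartite graphs) to extract a size-$n$ matching $M_i\subseteq E_i$ from each hypothesis $\nu^*(E_i)\ge n$, invoke the cited Theorem~\ref{ab} as a black box, and observe that the resulting rainbow matching with all weights equal to $1$ is a rainbow fractional matching; your converse direction (K\"onig applied to the support of the rainbow fractional matching, noting that any sub-matching of a rainbow edge set is rainbow) is also sound. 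So your argument is shorter and perfectly valid as a proof of the stated theorem, but it leans entirely on the nontrivial external result of Aharoni and Berger, whereas the paper's derivation buys an independent proof that moreover generalizes to arbitrary graphs ($2n$ sets) and to $r$-uniform hypergraphs, where no such integer-to-fractional reduction is available.
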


We get here a new proof of Theorem~\ref{ab}/\ref{fracbip} by considering fractional matchings. Our unified approach also yields the following new result for arbitrary graphs.

\begin{theorem} \label{graphs}
  Let $n$ be a positive integer or half-integer. Any family $E_1, \ldots , E_{2n}$ of sets of edges in an arbitrary graph, such that $\nu^*(E_i) \ge n$ for $i=1, \ldots , 2n$, has a rainbow fractional matching of size~$n$.
\end{theorem}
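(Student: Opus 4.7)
My first observation is that Theorem~\ref{graphs} is the $r=2$ specialization of the main theorem announced in the abstract: graphs are $2$-uniform hypergraphs, and the bound $\lceil rn\rceil$ equals $\lceil 2n\rceil = 2n$ precisely when $n$ is an integer or half-integer. Hence I would prove the general $r$-uniform statement; the plan below is phrased in the graph setting.

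\textbf{Setup.} I take the ground set $V = \bigsqcup_{i=1}^{2n} E_i$, thinking of the same edge in two different color classes as two distinct elements, and write $\pi\colon V\to E(G)$ for the projection onto edges. On $V$ I put two structures: the partition matroid $M$ whose blocks are the color classes $E_i$, of rank $2n$, whose bases are exactly the rainbow transversals; and the \emph{deficiency complex} $\mathcal{K} = \{F\subseteq V : \nu^*(\pi(F)) < n\}$, which is a simplicial complex because $\nu^*$ is monotone in its edge set. A rainbow fractional matching of size $n$ (allowing ourselves to drop duplicated edges, which never decreases $\nu^*$) is the same as an $M$-independent set that is \emph{not} a face of $\mathcal{K}$, so the task reduces to producing one such set.

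\textbf{Tool.} The Kalai--Meshulam theorem referred to in the abstract is the statement that if $\mathcal{K}$ is $d$-collapsible on $V$ and $M$ is a matroid on $V$ of rank exceeding $d$, then some $M$-independent set fails to be a face of $\mathcal{K}$. Since $\rho(M)=2n$, the proof reduces to verifying $(2n-1)$-collapsibility of $\mathcal{K}$. I would attempt this via LP duality: every $F\in\mathcal{K}$ is witnessed by a fractional vertex cover of $\pi(F)$ of weight $<n$, and an extreme half-integral cover has support of size at most $2n-1$ vertices of $G$. This small support should pinpoint a free face of $\mathcal{K}$ of dimension at most $2n-1$, enabling an elementary $d$-collapse; iterating then collapses $\mathcal{K}$ away.

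\textbf{Main obstacle.} The technical heart is the $(2n-1)$-collapsibility. What makes it delicate is that $\nu^*(\pi(F))<n$ is a \emph{global} condition on $\pi(F)$ as an edge set, while the projection $\pi$ is typically not injective, so the elementary collapses have to interact correctly with the fibers of $\pi$ and with the possibility that multiple color classes share edges. I expect the decisive lemma to be a structural one — extracted from the Edmonds--Gallai structure of a maximum fractional matching, or equivalently from the facet structure of the fractional vertex cover polytope — selecting, for each deficient face, a distinguished element of $V$ whose link in $\mathcal{K}$ lies in a face of dimension at most $2n-1$. Once that lemma is established, Kalai--Meshulam immediately yields the desired rainbow fractional matching of size $n$.
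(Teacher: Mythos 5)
Your reduction is exactly the paper's: label the edges by color to get $\tilde E$, take the partition matroid and the deficiency complex $\mathcal{K}$, and invoke Kalai--Meshulam, so that everything hinges on $(2n-1)$-collapsibility (in the paper, $(\lceil rn\rceil-1)$-collapsibility). Two problems, one minor and one serious. The minor one: the version of Kalai--Meshulam you quote --- ``$\mathcal{K}$ $d$-collapsible and $\mathrm{rank}(M)>d$ implies some independent set is not a face'' --- is false as stated (the full simplex is $1$-collapsible and contains every matroid on its vertex set). The actual theorem, assuming $M\subseteq\mathcal{K}$, only produces a face $\tau\in\mathcal{K}$ with $\rho(V\setminus\tau)\le d$; for the partition matroid with $2n$ parts and $d=2n-1$ this means $\tau$ contains an entire color class, and you must then use the hypothesis $\nu^*(E_i)\ge n$ to get the contradiction. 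This is easily repaired, but as written your ``Tool'' step does not go through.

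The serious gap is the collapsibility itself, which is the technical heart of the result and is only gestured at. Your sketch --- an extreme half-integral cover of weight $<n$ has support of size at most $2n-1$, which ``should pinpoint a free face'' --- does not say which faces are free, why their dimension is small, or, crucially, why the process iterates: after one elementary collapse the remaining complex is no longer of the form $\{F:\nu^*(\pi(F))<n\}$, so there is no self-evident induction. The paper resolves all three points by proving a weighted generalization: the free face is an inclusion-minimal $\overline{E}$ attaining $\overline{n}=\max\{\nu^*(E'): E'\in X\}$; a generic perturbation of the \emph{vertex} weights makes the optimal fractional cover unique, which is what shows $\overline{E}$ lies in a unique facet; the dimension bound comes from a basic optimal solution of the \emph{primal} LP (no zero coordinates by inclusion-minimality, hence $|\overline{E}|$ tight vertex constraints, giving $|\overline{E}|\le r\overline{n}/(\underline{a}\,\underline{b})<rn$ in the weighted setting); and a small perturbation of the \emph{edge} weights exhibits the post-collapse complex as the deficiency complex with threshold $\overline{n}$, so induction on $|X|$ applies. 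Your proposal contains none of these ideas, and the two specific ingredients you do anticipate point the wrong way: half-integrality of covers and Edmonds--Gallai structure are graph-specific and would not yield the $r$-uniform theorem you say you intend to prove, and the non-injectivity of $\pi$ that you single out as the main interaction difficulty is in fact a non-issue --- cloning vertices preserves $d$-collapsibility (Proposition~\ref{blow}), so collapsibility only needs to be proved for the unlabeled complex on the edge set.
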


Thus, the cost of allowing arbitrary graphs instead of just bipartite ones is only one more family in the fractional case. This suggests that the difficulty of the problem for (integral) matchings in arbitrary graphs has to do with the fact that matchings behave differently than fractional matchings in such graphs.

The parameter $2n$ is best possible for fractional matchings in arbitrary graphs. Indeed, if $n$ is a half-integer, say $n=k+\frac{1}{2}$, $k \ge 1$, take $E_1, \ldots , E_{2k}$ to be $2k$ copies of the edge set of a cycle of length $2k+1$. Then $\nu^*(E_i)=n$, but we need all $2k+1$ edges to achieve this, so there is no rainbow fractional matching of size $n$. If $n \ge 3$ is an integer, we can repeat the above construction using two vertex-disjoint odd cycles whose lengths add up to $2n$.

In fact, our approach is more general, as we consider $r$-uniform hypergraphs for any $r \ge 2$. The definition of a fractional matching for a set $E$ of edges of size $r$ is the same as above, but for $r>2$ the number $\nu^*(E)$ may have as its fractional part any rational number in $[0,1)$.

Our main result, of which Theorem~\ref{graphs} is the case $r=2$, is the following.

\begin{theorem} \label{hyper}
Let $r \ge 2$ be an integer, and let $n$ be a positive rational number. Any family $E_1, \ldots , E_{\lceil rn \rceil}$ of sets of edges in an $r$-uniform hypergraph, such that $\nu^*(E_i) \ge n$ for $i=1, \ldots , \lceil rn \rceil$, has a rainbow fractional matching of size $n$.
\end{theorem}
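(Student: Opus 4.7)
I would follow the topological route flagged in the abstract: build a simplicial complex whose non-faces encode fractional matchings of size at least $n$ together with a partition matroid encoding the rainbow condition, and conclude via the Kalai--Meshulam theorem. Concretely, put $m := \lceil rn\rceil$, let $V := \bigsqcup_{i=1}^{m} E_i$ be a disjoint union carrying color labels $i \in [m]$, and define a simplicial complex $\mathcal X$ on $V$ by declaring $F \subseteq V$ to be a face iff $\nu^*(\pi(F)) < n$, where $\pi(F) \subseteq E(H)$ denotes the underlying edge set; monotonicity of $\nu^*$ makes $\mathcal X$ downward closed. Let $M$ be the partition matroid on $V$ whose blocks are the color classes $V_i$, so that bases of $M$ are exactly rainbow selections $\{e_i \in E_i : i\in[m]\}$. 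In this dictionary, a rainbow fractional matching of size at least $n$ is precisely a basis of $M$ that fails to be a face of $\mathcal X$.

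\emph{Reduction to a collapsibility estimate.} The Kalai--Meshulam theorem referenced in the abstract couples collapsibility of a simplicial complex with a matroid on the same vertex set, and in the form needed here it asserts that if $\mathcal X$ is $d$-collapsible and $M$ has rank $>d$, then $M$ admits an independent set which is a non-face of $\mathcal X$ (and for a partition matroid such an independent set may be taken transversal to the blocks). Applied with $d = m-1$, this would furnish the desired rainbow non-face. The problem thus reduces to a single estimate: $\mathcal X$ is $(\lceil rn\rceil - 1)$-collapsible.

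\emph{The collapsibility lemma.} The natural lever here is LP duality. By the K\"onig-type duality for fractional matchings in hypergraphs, $\nu^*(F)<n$ is equivalent to the existence of a fractional vertex cover $g\colon V(H)\to\mathbb{R}_+$ of total weight $\|g\|_1<n$ covering every edge of $\pi(F)$. I would associate with each face of $\mathcal X$ a canonical extreme-point optimal cover $g_F$ and build a collapsing sequence guided by these covers: at each step, pick a ``heavy'' vertex $v^\ast$ in the current cover and use a small free face containing an edge incident to $v^\ast$ as the pivot of an elementary collapse. The key quantitative point is that since $\|g\|_1<n$ and each hyperedge contains $r$ vertices, averaging should force the pivot to have dimension at most $m-2$, making every collapse performed a legal $(m-1)$-collapse.

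\emph{Main obstacle.} The heart of the difficulty is to upgrade this local LP-based pivoting into a \emph{global} collapsing scheme. One must handle the case where the optimal cover is spread nearly uniformly over its support (no obvious heavy vertex), verify that the order in which faces are processed never leaves too-large residual cells, and squeeze the sharp $\lceil rn\rceil$ rather than the looser $rn+1$ out of the LP---a $1/r$-integrality-flavored phenomenon for extreme points of the fractional cover polytope. A secondary concern is ensuring that the Kalai--Meshulam conclusion, when specialized to a partition matroid, is indeed rainbow (a transversal) rather than merely matroid-independent; this should be automatic for partition matroids, but is worth checking.
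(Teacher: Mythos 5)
Your reduction is essentially the paper's: label the edges by color, take the complex of labeled edge sets whose projection has $\nu^*<n$, take the partition matroid with blocks the color classes, and invoke Kalai--Meshulam with $d=\lceil rn\rceil-1$. But the proposal stops exactly where the real work begins: the claim that this complex is $(\lceil rn\rceil-1)$-collapsible is not proved, and the sketch offered (an extreme-point cover $g_F$, a ``heavy vertex'', local pivots of small dimension) is not an argument --- you yourself flag the globalization as the main obstacle. The paper's proof of this lemma has a specific structure that the sketch does not contain: one works with a \emph{weighted} fractional matching number $\nu^*_{\mathbf a,\mathbf b}$, perturbs the vertex weights so that the dual (covering) LP has a unique optimum for every subset, chooses as the free face an inclusion-minimal set $\overline E$ attaining $\overline n=\max_{E'\in X}\nu^*(E')$ (uniqueness of the dual optimum is what shows $\overline E$ lies in a unique facet), bounds $|\overline E|$ by counting tight vertex constraints at an extreme point of the primal polytope (this is where $\lceil rn\rceil-1$, not $rn+1$, comes out), and then perturbs the \emph{edge} weights downward off $\overline E$ so that the complex remaining after the collapse is again of the same weighted form with parameter $\overline n<n$, allowing induction on $|X|$. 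None of these ideas (minimal maximizer as pivot, weight perturbations to close the induction) appear in your plan, so the proposal has a genuine gap at its central step. A further point: since the same edge may occur in several $E_i$'s, the labeled complex is a blow-up of the complex on the underlying edge set, and one needs (as the paper does) that blow-ups preserve $d$-collapsibility.

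Separately, your statement of the Kalai--Meshulam input is wrong as phrased: ``$\mathcal X$ is $d$-collapsible and $M$ has rank $>d$ implies $M$ has an independent non-face'' fails (take $\mathcal X$ the full simplex, which is $1$-collapsible, and any matroid of large rank). The correct form gives, when $M\subseteq\mathcal X$, a face $\tau$ with $\rho(V\setminus\tau)\le d$; with $d=\lceil rn\rceil-1$ and the partition matroid this means some face contains an entire color class $E_i$, and the contradiction comes only from the hypothesis $\nu^*(E_i)\ge n$, which forces every full color class to be a non-face. That extra step is where the hypothesis on the $E_i$'s is actually used, and it is missing from your reduction.
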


Constructions similar to those described above for $r=2$ show that when $rn$ is an integer, we cannot do with fewer than $rn$ sets of edges. When $rn$ is not an integer, we believe that $\lfloor rn \rfloor$ sets (instead of $\lceil rn \rceil$) suffice, but our method of proof is not capable of showing this.

Just like bipartite graphs behave slightly better than general graphs with respect to guaranteeing a rainbow fractional matching, so do $r$-partite hypergraphs compared to general $r$-uniform hypergraphs. Recall that a hypergraph is $r$-\emph{partite} if there exists a partition $A_1, \ldots , A_r$ of the vertex set, so that every edge consists of exactly one vertex from each $A_i$.

The following is a sharpening of the main result when confined to $r$-partite hypergraphs and integer values of $n$.

\begin{theorem} \label{partite}
Let $r \ge 2$ and $n \ge 1$ be integers. Any family $E_1, \ldots , E_{rn-r+1}$ of sets of edges in an $r$-partite hypergraph, such that $\nu^*(E_i) \ge n$ for $i=1, \ldots , rn-r+1$, has a rainbow fractional matching of size $n$.
\end{theorem}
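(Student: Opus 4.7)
My plan is to use the topological framework that is employed in the proof of Theorem~\ref{hyper}, sharpening the key Leray estimate by $r - 1$ to exploit the $r$-partite structure. Put $m = rn - r + 1$, let $V = \bigsqcup_{i=1}^m E_i$ be the labeled disjoint union of the edge families, and let $\pi\colon V \to \bigcup_i E_i$ be the forgetful map. Let $M$ be the partition matroid on $V$ whose blocks are the $E_i$'s; an $M$-transversal is precisely a rainbow selection of one edge from each $E_i$. Form the downward-closed simplicial complex
\[
\mathcal{K} = \{\sigma \subseteq V : \nu^*(\pi(\sigma)) < n\},
\]
so that a rainbow fractional matching of size $n$ is exactly an $M$-transversal lying outside $\mathcal{K}$.

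The bridge to topology is the Kalai--Meshulam rainbow simplex theorem: if $\mathcal{K}$ has Leray number at most $d$ and the number of color classes strictly exceeds $d$, then some $M$-transversal is not a face of $\mathcal{K}$. Hence it suffices to establish
\[
L(\mathcal{K}) \le rn - r.
\]

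To obtain this refinement, I would revisit the Leray estimate used for Theorem~\ref{hyper}—which in the general setting yields only $L(\mathcal{K}) \le \lceil rn \rceil - 1$—and upgrade its LP-dual step using the $r$-partite structure. At each $\sigma \in \mathcal{K}$, duality for $\nu^*(\pi(\sigma))$ supplies a fractional vertex cover of total weight $< n$; in the $r$-partite case the constraint that every edge meets each of the $r$ parts in exactly one vertex lets one rebalance the cover onto any prescribed $r - 1$ parts, shaving $r - 1$ off the dimension of the witness set driving each elementary collapse. The integrality of $n$ enters through the strict inequality $\nu^* < n$, which provides enough LP slack for these local moves to assemble into a global $(rn - r)$-collapsing order on $\mathcal{K}$.

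The main obstacle is precisely this last step: converting the $r$-partite dual-certificate improvement into a rigorous global Leray (or collapsibility) bound—most cleanly by exhibiting an explicit $(rn - r)$-collapsing order, or by bounding the reduced homology of induced subcomplexes in every degree $\ge rn - r$. Once that bound is in hand, the Kalai--Meshulam step produces a transversal outside $\mathcal{K}$, yielding the desired rainbow fractional matching of size $n$.
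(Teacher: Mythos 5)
Your setup (labeled disjoint union, partition matroid, the complex $\mathcal{K}$ of sets whose projection has $\nu^*<n$, and the Kalai--Meshulam step with $rn-r+1>rn-r$ color classes) is exactly the paper's framework, and that part is fine. But the entire content of the theorem lies in the bound $L(\mathcal{K})\le rn-r$, and your proposal does not prove it --- you explicitly flag it as ``the main obstacle.'' Moreover, the mechanism you sketch for it (taking the dual fractional cover of weight $<n$ and ``rebalancing'' it onto a prescribed set of $r-1$ parts, with the strict inequality $\nu^*<n$ supplying ``LP slack'') is not an argument: it is unclear what object of dimension $rn-r$ this rebalancing is supposed to produce, how it would interact with a collapsing order, or where integrality of $n$ is actually used. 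Shifting a cover between parts preserves the edge constraints but changes its total weight (unless the parts carry equal total vertex weight) and can violate nonnegativity, so even the local statement is not established. As it stands the proposal reduces the theorem to an unproved topological claim.

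For comparison, the paper obtains the improved bound not on the dual side but by a primal counting refinement inside the collapsibility induction of Theorem~\ref{weighted}. There one collapses on an inclusion-minimal face $\overline{E}$ attaining $\overline{n}=\max_{E'\in X}\nu^*_{\mathbf{a},\mathbf{b}}(E')$, and bounds $|\overline{E}|$ by the number of tight vertices $U$ at an optimal basic solution $\overline{f}$. In the $r$-partite case one splits $U=\bigcup_{i=1}^r U_i$ by parts; since every edge meets each part in at most one vertex, the double count gives $|U_i|\underline{b}\le\sum_{e\in\overline{E}}\overline{f}(e)\le\overline{n}/\underline{a}$, hence $|U_i|\le\lfloor\overline{n}/(\underline{a}\underline{b})\rfloor$ and $|\overline{E}|\le r\lfloor\overline{n}/(\underline{a}\underline{b})\rfloor$ (this is where the factor $r$ loss of Claim~\ref{claim2} disappears, and where the floor --- i.e.\ integrality --- does real work). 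The weighted set-up with the $\varepsilon$-perturbations of $\mathbf{a}$ and the uniqueness of the dual optimum is what lets this per-step bound survive the induction, and integrality of $n$ enters only at the end via $\overline{n}<n\Rightarrow\lfloor\overline{n}\rfloor\le n-1$, yielding $(rn-r)$-collapsibility and hence the theorem. If you want to complete your proposal, you need to supply an argument of this kind (or an independent proof of the $(rn-r)$-Leray bound); the dual-rebalancing heuristic by itself does not do it.
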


While the requirement that $n$ be an integer is restrictive, we note that it always holds when the hypergraph is equi-partite and each $E_i$ has a perfect fractional matching. In this case $n=|A_1|= \cdots =|A_r|$ is the size of a perfect fractional matching, and Theorem~\ref{partite} guarantees the existence of a rainbow perfect fractional matching. The parameter $rn-r+1$ is best possible here. Indeed, taking $n=r-1$ to be a prime power, and letting $E_i$ be the set of lines of a truncated projective plane of order $n$, we have $|E_i|=n^2=rn-r+1$ and $\nu^*(E_i)=n$. However, we need all edges to achieve this value of $\nu^*$ (see Theorem~2.1 of F\"uredi \cite{Fu}), so fewer copies of $E_i$ do not suffice for a rainbow perfect fractional matching. We also observe that Theorem~\ref{partite} specializes to Theorem~\ref{fracbip} in the case $r=2$.

Thus, we only have to prove Theorems~\ref{hyper} and~\ref{partite}. This will be done in Section~\ref{proof}. The proof is based on a topological result, due to Kalai and Meshulam \cite{KM}, which they developed as an extension of the colorful versions, due to Lov\'asz and B\'ar\'any \cite{Ba}, of the Helly and Carath\'eodory theorems. The necessary notions will be reviewed in Section~\ref{tools}.

In the case of perfect fractional matchings, both Theorem~\ref{hyper} and Theorem~\ref{partite} follow from B\'ar\'any's colorful Carath\'eodory theorem.

\begin{theorem}[Theorem~2.2 of B\'ar\'any~\cite{Ba}]
  Suppose $V_1, \ldots, V_n\subseteq\mathbb{R}^n$ and $a\in \mathrm{pos}\,V_i$ (the conical hull of $V_i$) for $i = 1, \ldots, n$. Then there exist elements $v_i \in V_i$ for each $i$ such that $a \in \mathrm{pos}\{v_1,  \ldots, v_n\}$.
\end{theorem}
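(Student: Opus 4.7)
The plan is the classical minimization argument due to B\'ar\'any. I first reduce to the case where each $V_i$ is finite: applying ordinary Carath\'eodory inside $\mathrm{pos}\,V_i$ shows that $a$ is a nonnegative combination of at most $n$ elements of $V_i$, so I may replace $V_i$ by such a finite subset. Then $V_1 \times \cdots \times V_n$ is finite, and I select a tuple $(v_1, \ldots, v_n)$ that minimizes the Euclidean distance from $a$ to the cone $C := \mathrm{pos}\{v_1, \ldots, v_n\}$. Let $p$ be the nearest point of $C$ to $a$; the goal is to prove $p = a$.

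Suppose for contradiction that $p \ne a$ and set $u := a - p$. The standard convex projection inequality $(q - p) \cdot u \le 0$ for all $q \in C$, combined with the fact that $C$ is a cone (so $tq \in C$ for every $t \ge 0$), forces $u \cdot q \le 0$ for every $q \in C$. Since $0 \in C$ we get $u \cdot p \ge 0$, and since $p \in C$ we get $u \cdot p \le 0$; hence $u \cdot p = 0$ and $u \cdot a = u \cdot p + \|u\|^2 = \|u\|^2 > 0$. By the conical form of Carath\'eodory's theorem, $p$ admits a representation $p = \sum_{i \in J} \lambda_i v_i$ with $\lambda_i > 0$ and $\{v_i\}_{i \in J}$ linearly independent. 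If $|J| = n$ the vectors $v_1, \ldots, v_n$ span $\mathbb{R}^n$, so $p$ lies in the interior of $C$ and a small step from $p$ toward $a$ remains inside $C$, contradicting the minimality of $\|a - p\|$. Hence $|J| < n$ and I may pick some $k \in [n] \setminus J$.

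Next I invoke the hypothesis at the $k$-th class: since $a \in \mathrm{pos}\,V_k$ and $u \cdot a > 0$, some $w \in V_k$ must satisfy $u \cdot w > 0$. Swap $v_k$ for $w$ and let $C'$ denote the new cone. Because the representation of $p$ uses only indices in $J$ and $k \notin J$, the point $p$ still lies in $C'$, so $p + tw \in C'$ for every $t \ge 0$. A direct expansion gives
\[
\|a - (p + tw)\|^2 = \|u\|^2 - 2t\, u \cdot w + t^2 \|w\|^2,
\]
which is strictly less than $\|u\|^2$ for sufficiently small $t > 0$. This contradicts the minimality of the original tuple and completes the proof.

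The only delicate point is ensuring that some index $k$ is free for swapping, i.e., that $p$ admits a representation not using all $n$ of the $v_i$'s; conical Carath\'eodory supplies this, with the boundary case $|J| = n$ disposed of by the interior-point observation. Once that is in hand, the colorful hypothesis at the unused index $k$ produces a $w$ pointing in the direction $u$, and a first-order Taylor expansion forces strict progress.
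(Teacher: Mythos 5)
Your proof is correct: it is exactly B\'ar\'any's original minimal-distance argument for the colorful (conical) Carath\'eodory theorem, which the paper does not prove itself but simply quotes as Theorem~2.2 of \cite{Ba}. The only step left tacit is that a finitely generated cone is closed (so the nearest point $p$ exists), a standard fact worth a word but not a gap.
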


To derive Theorem~\ref{hyper} in the case of $rn$ vertices, note that a set $E$ of edges has a perfect fractional matching if and only if $\vec{1} \in \mathrm{pos}\{\chi_e:\,e \in E\}$, where $\chi_e$ is the indicating vector of $e$ as a subset of the vertex set. To derive the corresponding case of Theorem~\ref{partite} note also that since the sum of entries of $\chi_e$ in every side of the hypergraph is the same, the vectors $\chi_e$ all live in an $(rn-r+1)$-dimensional subspace.

We end the introduction with a comment on the relation between the question of the existence of a rainbow fractional matching, dealt with here, and the original question concerning the existence of a rainbow (integral) matching. For graphs, the state of affairs in the two questions is very similar: the questions are equivalent in the bipartite case, and in the general case we know that $2n$ sets suffice for a rainbow fractional matching of size $n$, while $3n-2$ suffice for a rainbow matching of size $n$ (and it may well be the case that already $2n$ suffice). For $r$-uniform hypergraphs with $r>2$, however, the two questions diverge. While $\lceil rn \rceil$ sets suffice for a rainbow fractional matching of size $n$, results of Aharoni and Berger \cite{AB} and Alon \cite{Al} show that even in $r$-partite hypergraphs, the number of matchings of size $n$ required for a rainbow matching of size $n$ grows exponentially in $r$ (the nature of the dependence on $n$ is far from being understood).

\section{Topological tools}\label{tools}

Let $V$ be a finite vertex set. A \emph{simplicial complex} on $V$ is a family $X$ of subsets of $V$ (called simplices or faces) which is downward closed (i.e., $\sigma \subseteq \tau \in X \Longrightarrow \sigma \in X$). A face $\tau \in X$ which is maximal with respect to inclusion is called a facet of $X$. A \emph{matroid} on $V$ is a simplicial complex $M$ which satisfies the augmentation property (i.e., $\sigma, \tau \in M,\,\, |\sigma| < |\tau| \Longrightarrow \sigma \cup \{v\} \in M$ for some $v \in \tau \setminus \sigma$). The rank function $\rho$ of $M$ assigns to every subset $U \subseteq V$ the number $\rho(U) = \max \{|\sigma|:\, \sigma \in M, \sigma \subseteq U\}$.

Let $X$ be a simplicial complex, and let $d$ be a positive integer. If $\sigma$ is a face which is contained in a unique facet $\tau$ of $X$, and $|\sigma| \le d$, the operation of removing from $X$ the face $\sigma$ and all faces containing it is called an \emph{elementary $d$-collapse}. A complex $X$ is $d$-\emph{collapsible} if there exists a sequence of elementary $d$-collapses $X = X_0 \xlongrightarrow[\sigma_1]{} X_1 \xlongrightarrow[\sigma_2]{} X_2 \xlongrightarrow[\sigma_3]{} \cdots \xlongrightarrow[\sigma_m]{} X_m = \{ \emptyset \}$ that reduces $X$ to $\{\emptyset\}$. Wegner \cite{We} introduced this property, and observed that every $d$-collapsible complex $X$ is $d$-Leray, i.e., all its induced subcomplexes have trivial homology in dimensions $d$ and above. As we will not work here directly with $d$-Lerayness, we do not give a detailed definition.

We will use the following observation about $d$-collapsibility (the analogous statement for $d$-Lerayness appeared as Proposition 14(i) of Alon et al. \cite{AKMM}).

\begin{proposition} \label{blow}
Let $X$ be a simplicial complex on $V$. Consider a blown-up vertex set $\tilde{V}$ which contains one or more copies of each vertex in $V$, i.e., $\tilde{V} = \{ (v,i): \, v \in V,\, i=1,\ldots,k_v \}$, and the simplicial complex $\tilde{X}$ on $\tilde{V}$ having as faces those subsets of $\tilde{V}$ whose projection on $V$ is a face of $X$. Let $d$ be a positive integer. If $X$ is $d$-collapsible then so is $\tilde{X}$.
\end{proposition}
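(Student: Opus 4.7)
The plan is to induct on the total number of extra copies, $\sum_{v \in V}(k_v-1)$, thereby reducing to the case where $\tilde V$ is obtained from $V$ by duplicating a single vertex $v^* \in V$ into two copies $v^*$ and $v'$. The general statement then follows by iterating the construction one vertex at a time, so it suffices to handle this single-duplication case.

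Fix a $d$-collapsing sequence $X = X_0 \to X_1 \to \cdots \to X_m = \{\emptyset\}$ for $X$, and let $\pi\colon \tilde V \to V$ denote the projection that sends both $v^*$ and $v'$ to $v^*$ and is the identity elsewhere. Since $\tilde X_m = \{\emptyset\}$, the goal is to construct a $d$-collapsing sequence for $\tilde X$ that passes through $\tilde X_0, \tilde X_1, \ldots, \tilde X_m$, and for this it is enough to establish the following key lemma: \emph{if $Y$ is a complex on $V$ admitting an elementary $d$-collapse $Y \to Y'$ via a face $\sigma$ (contained in the unique facet $\tau$ of $Y$), then $\tilde Y$ can be reduced to $\tilde{Y'}$ by at most two elementary $d$-collapses.}

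To prove the lemma, split on whether $v^* \in \sigma$. If $v^* \notin \sigma$, then $\sigma \in \tilde Y$, every face of $\tilde Y$ containing $\sigma$ projects to a face of $Y$ containing $\sigma$ and therefore to a subset of $\tau$, so the unique facet of $\tilde Y$ containing $\sigma$ is $\pi^{-1}(\tau)$; one elementary $d$-collapse via $\sigma$ (valid since $|\sigma| \le d$) then removes exactly the faces of $\tilde Y$ whose projection contains $\sigma$, yielding $\tilde{Y'}$. If $v^* \in \sigma$, set $\sigma' := (\sigma \setminus \{v^*\}) \cup \{v'\}$ and first collapse via $\sigma$: as above its unique facet in $\tilde Y$ is $\pi^{-1}(\tau) = \tau \cup \{v'\}$. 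In the resulting complex, a face $\tilde\alpha$ containing $\sigma'$ must satisfy $\sigma' \subseteq \tilde\alpha \subseteq \pi^{-1}(\tau)$ together with $\sigma \not\subseteq \tilde\alpha$; since $\sigma \setminus \{v^*\} \subseteq \sigma' \subseteq \tilde\alpha$, the latter condition reduces to $v^* \notin \tilde\alpha$, pinning down the unique facet as $(\tau \setminus \{v^*\}) \cup \{v'\}$, so a second elementary $d$-collapse via $\sigma'$ removes the remaining faces of $\tilde Y$ whose projection contains $\sigma$, producing $\tilde{Y'}$.

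The anticipated obstacle is the case $v^* \in \sigma$, where two preimages $\sigma$ and $\sigma'$ of the collapsed face must both be eliminated, yet neither elimination is itself an elementary collapse of $\tilde Y$ (the initial complex contains $\pi^{-1}(\tau)$ as a facet containing both $\sigma$ and $\sigma'$, so $\sigma'$ is not in a unique facet of $\tilde Y$). The saving point is that performing the two collapses in sequence breaks the symmetry: after $\sigma$ is collapsed, the constraint $\sigma \not\subseteq \tilde\alpha$ combined with $\sigma' \subseteq \tilde\alpha$ forces $v^* \notin \tilde\alpha$, automatically restoring the unique-facet condition required for the second collapse. Concatenating the lifted collapses along $X_0 \to \cdots \to X_m$ then gives a $d$-collapsing sequence for $\tilde X$, completing the induction and the proof.
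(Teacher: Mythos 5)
Your proof is correct and follows essentially the same route as the paper: reduce to duplicating one vertex, lift a $d$-collapsing sequence of $X$ step by step, using the same face $\sigma$ when it avoids the duplicated vertex and a double collapse via $\sigma$ and $(\sigma\setminus\{v^*\})\cup\{v'\}$ otherwise, while maintaining that the intermediate complexes are exactly the blow-ups $\tilde{X_i}$. In fact, your argument supplies in detail the verification that the paper leaves as ``one can verify by induction on $i$.''
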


\begin{proof} It suffices to prove the statement for the case when only one vertex $u \in V$ is blown up, being replaced by two copies of it. The general case will follow by repeated use of this special case of the proposition.

Adapting our notation to the special case, we have $\tilde{V} = V \cup \{\tilde{u}\}$, the restriction of $\tilde{X}$ to $V$ coincides with $X$, and vertices $u$, $\tilde{u}$ are clones, in the sense that for all $\sigma \subseteq V \setminus \{u\}$ we have $\sigma \cup \{u\} \in \tilde{X} \Longleftrightarrow \sigma \cup \{\tilde{u}\} \in \tilde{X} \Longleftrightarrow \sigma \cup \{u,\tilde{u}\} \in \tilde{X}$. Now, given a sequence of elementary $d$-collapses $X = X_0 \xlongrightarrow[\sigma_1]{} X_1 \xlongrightarrow[\sigma_2]{} X_2 \xlongrightarrow[\sigma_3]{} \cdots \xlongrightarrow[\sigma_m]{} X_m = \{ \emptyset \}$, we show how to modify the sequence so as to reduce $\tilde{X}$ to $\{ \emptyset \}$. Whenever the original $\sigma_i$ does not contain $u$, we use the same $\sigma_i$ in our modified sequence. Whenever the original $\sigma_i$ contains $u$, we replace the step by a double step using $\sigma_i$ and $(\sigma_i \setminus \{u\}) \cup \{\tilde{u}\}$, respectively. One can verify by induction on $i$ that these are indeed legal elementary $d$-collapses, and that after each step or double step the remaining subcomplex $\tilde{X}_i$ satisfies: its restriction to $V$ coincides with $X_i$, and vertices $u$, $\tilde{u}$ are clones with respect to $\tilde{X}_i$. It follows in particular that $\tilde{X}_m = \{ \emptyset \}$ as required.
\end{proof}

The following result of Kalai and Meshulam is our main tool.

\begin{theorem}[Theorem~1.6 of Kalai and Meshulam \cite{KM}] \label{km}
  Let $X$ be a simplicial complex and let $M$ be a matroid with rank function $\rho$, both on the same vertex set $V$, such that $M \subseteq X$. Let $d$ be a positive integer. If $X$ is $d$-Leray, then there exists a face $\tau \in X$ such that $\rho(V \setminus \tau) \le d$.
\end{theorem}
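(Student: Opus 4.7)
The theorem requires us to find a face of $X$ whose complement has small matroid rank, given an interaction between a topological hypothesis ($d$-Lerayness) and an embedded combinatorial structure ($M \subseteq X$). My plan is to combine matroid operations with algebraic topology, proceeding by induction on $|V|$.

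First, I reduce to the loopless case: if $v$ is a loop of $M$ then $\rho$ is unchanged by restriction to $V \setminus \{v\}$, the induced subcomplex $X[V \setminus \{v\}]$ is automatically $d$-Leray, and the theorem for $(X[V \setminus \{v\}], M \setminus v)$ supplies the desired $\tau$. The base case $\rho(V) \le d$ is trivial via $\tau = \emptyset$.

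In the inductive step ($\rho(V) \ge d+1$), I fix a non-loop vertex $v$ and consider two smaller pairs: the deletion $(X[V \setminus \{v\}], M \setminus v)$ and the contraction $(\mathrm{lk}_X(v), M/v)$. The first ambient complex is $d$-Leray as an induced subcomplex; a Mayer--Vietoris argument applied to $X[U \cup \{v\}] = X[U] \cup \mathrm{star}_X(v)$ for each $U \subseteq V \setminus \{v\}$ shows that the link $\mathrm{lk}_X(v)$ is also $d$-Leray. Moreover, $M/v \subseteq \mathrm{lk}_X(v)$, since $\sigma \in M/v$ means $\sigma \cup \{v\} \in M \subseteq X$. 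Applying the inductive hypothesis to the contraction yields $\tau' \in \mathrm{lk}_X(v)$ with $\rho_{M/v}((V \setminus \{v\}) \setminus \tau') \le d$; taking $\tau = \tau' \cup \{v\} \in X$ translates this into $\rho(V \setminus \tau) \le d + 1$, off by one.

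The main obstacle is closing this one-unit gap, and here I expect the full strength of the $d$-Leray hypothesis to be needed once more. Via the Mayer--Vietoris long exact sequence for $X = X[V \setminus \{v\}] \cup \mathrm{star}_X(v)$ with intersection $\mathrm{lk}_X(v)$, together with the vanishing of $H_d$ on all induced subcomplexes of $X$, one should be able to argue that if the inductive bound of $d+1$ were sharp at every choice of $v$, matroid bookkeeping would force a nontrivial class in $H_d$ of some induced subcomplex, contradicting $d$-Lerayness. Pinning down the right choice of $v$ and converting matroid-rank deficits into concrete homology classes is the technical heart of the argument and the step I expect to require the most care.
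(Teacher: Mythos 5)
There is a genuine gap, and you have in fact flagged it yourself: the entire argument hinges on the final paragraph, where the one-unit loss from the deletion/contraction induction is supposed to be recouped, and that paragraph contains no proof --- only the hope that ``matroid bookkeeping would force a nontrivial class in $H_d$ of some induced subcomplex.'' That hoped-for statement is essentially the theorem itself in disguise, so nothing has been reduced to anything simpler. The preceding steps are fine but cheap: the base case, the loop reduction, the fact that $\mathrm{lk}_X(v)$ is $d$-Leray (your Mayer--Vietoris argument for this is correct), and the inclusion $M/v \subseteq \mathrm{lk}_X(v)$ all check out, and the contraction step does yield $\rho(V\setminus\tau)\le d+1$ exactly as you compute. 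But the bound $d$ in the theorem is sharp (it encodes the colorful Helly theorem), and a purely local, one-vertex-at-a-time induction has no identified mechanism for recovering the unit of rank lost at each contraction; you would need to exhibit a concrete $d$-dimensional cycle in an induced subcomplex from the assumed failure of the conclusion, and no such construction is given or sketched. As it stands, the proposal proves only the weaker statement with $d+1$ in place of $d$.

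For comparison: the paper does not prove Theorem~\ref{km} at all --- it imports it from Kalai and Meshulam \cite{KM} and uses it as a black box. The known proofs there are not of the deletion/contraction type you attempt; they rely on substantially more global topological input (in particular, homological connectivity properties of matroid independence complexes and a homological comparison between $M$ and $X$), which is precisely the kind of ingredient your sketch is missing. Note also that this paper only ever invokes the theorem for $d$-collapsible complexes, and \cite{KM} (their Theorem~2.1) gives a stronger conclusion in that case by a more combinatorial argument; if your goal is a self-contained proof adequate for this paper, attacking the $d$-collapsible case directly --- using the collapsing sequence itself rather than Leray-type homology vanishing --- is a more promising route than trying to repair the inductive scheme above.
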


We will only use the conclusion of this theorem under the stronger assumption that $X$ is $d$-collapsible. (Alternatively, we could use Theorem~2.1 of \cite{KM} which, under this stronger assumption, derives a slightly stronger conclusion than we need.)

\section{Proofs} \label{proof}

The main ingredient of the proof of Theorem~\ref{hyper} is the following.

\begin{theorem} \label{collapsible}
  Let $r \ge 2$ be an integer, let $E$ be the set of edges of some $r$-uniform hypergraph, and let $n>1$ be a rational number. The simplicial complex $X$ on $E$, defined by $X = \{E' \subseteq E:\, \nu^*(E') < n\}$, is $(\lceil rn \rceil - 1)$-collapsible.
\end{theorem}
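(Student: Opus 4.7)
My approach relies on LP duality together with a sharp support bound on optimal dual solutions. By LP duality, $\nu^*(\sigma) < n$ holds if and only if there exists a fractional vertex cover $y \colon V \to \mathbb{R}_{\ge 0}$ with $\sum_v y(v) < n$ and $\sum_{v \in e} y(v) \ge 1$ for every $e \in \sigma$. For each edge $e \in E$, set
\[
C_e := \bigl\{\, y \in \mathbb{R}^V_{\ge 0} \,:\, \textstyle\sum_v y(v) < n,\ \sum_{v \in e} y(v) \ge 1 \,\bigr\},
\]
a convex subset of $\mathbb{R}^V$. Then $X$ is exactly the nerve of the family $\{C_e\}_{e \in E}$.

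The key estimate is that every non-empty intersection $\bigcap_{e \in \sigma} C_e$ contains a cover supported on at most $d := \lceil rn \rceil - 1$ vertices. Take $(f^*, y^*)$ an optimal primal--dual pair for the fractional matching/cover LP on $\sigma$. Complementary slackness gives $\mathrm{supp}(y^*) \subseteq \mathrm{sat}(f^*)$, where $\mathrm{sat}(f^*)$ denotes the set of vertices saturated by $f^*$. The elementary counting
\[
|\mathrm{sat}(f^*)| \le \sum_v \sum_{e \ni v} f^*(e) = r \sum_e f^*(e) = r\, \nu^*(\sigma) < rn
\]
forces $|\mathrm{supp}(y^*)| \le \lceil rn \rceil - 1 = d$, since the left-hand side is an integer.

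To convert the small-support property into $d$-collapsibility, I would execute an LP-driven collapse scheme. After a generic perturbation ensuring that the min-cover LP on every $\sigma \in X$ admits a unique optimum $y_\sigma$ (necessarily with $|\mathrm{supp}(y_\sigma)| \le d$), define the canonical superset $F(\sigma) := \{e \in E : \sum_{v \in e} y_\sigma(v) \ge 1\}$, which still lies in $X$ because it is covered by the same $y_\sigma$. Order the collapse steps by a generic linear functional on the covers. The aim is that at each step some face $\sigma$ of size $\le d$ is contained in a unique remaining facet $F(\sigma)$, which is exactly the elementary $d$-collapse condition; repeated application then drives $X$ down to $\{\emptyset\}$.

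The main obstacle is verifying that this canonical pairing $\sigma \leftrightarrow F(\sigma)$ genuinely assembles into a consistent sequence of elementary $d$-collapses, as faces of $X$ can interact in complicated ways. I expect to proceed either by induction on $|V|$, decomposing $X$ according to whether $y_\sigma$ vanishes on a distinguished vertex and combining the two sub-complexes via a gluing lemma whose hypothesis is that their overlap is $(d-1)$-collapsible; or by a direct nerve argument, using the small-support bound to restrict attention to a generically chosen union of $d$-dimensional coordinate subspaces of covers and then invoking the classical result that a nerve of convex sets in $\mathbb{R}^d$ is $d$-collapsible. The delicate technical point in either route is handling distinct small-support covers that witness the same face, which is where the genericity of the perturbation is essential.
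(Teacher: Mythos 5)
Your preparatory observations are sound: LP duality, a generic perturbation forcing unique optimal covers, and the counting argument showing that an optimal primal solution saturates fewer than $rn$ vertices are all ingredients of the actual proof (the paper's size bound on the collapsed face is obtained by essentially your count, applied to a basic optimal solution). But the heart of the matter --- an explicit, provably terminating sequence of elementary $d$-collapses --- is exactly what you leave open, and your two fallback routes do not close it. The pairing $\sigma \leftrightarrow F(\sigma)$ ordered by ``a generic linear functional on the covers'' is not shown to produce, at each stage, a face of size at most $d$ contained in a unique facet \emph{of the current subcomplex}; after faces are removed, the complex is no longer of the form $\{E' : \nu^*(E') < n\}$, so nothing you have proved applies to it, and the facet structure changes. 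The nerve route fails for a concrete reason: the sets $C_e$ live in $\mathbb{R}^V$, so Wegner's theorem only gives $|V|$-collapsibility, and restricting to a union of $d$-dimensional coordinate subspaces destroys convexity, so the classical nerve result cannot be invoked there. Also note that your small-support bound controls the support of the dual cover, whereas an elementary $d$-collapse needs a bound on the number of \emph{edges} in the face being removed; linking the two requires a basic-solution argument together with an inclusion-minimality hypothesis ruling out zero coordinates, which is not in your sketch.

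The paper resolves precisely these points as follows. It collapses at the face $\overline{E}$ that is inclusion-minimal among the faces attaining $\overline{n} = \max_{E' \in X} \nu^*(E')$: maximality of $\overline{n}$ plus uniqueness of the optimal cover of $\overline{E}$ shows that this cover already covers every edge $e$ with $\overline{E} \cup \{e\} \in X$, so $\overline{E}$ lies in a unique facet; inclusion-minimality forces a basic optimal primal solution to have all tightness at vertex constraints, giving $|\overline{E}| \le \lceil rn \rceil - 1$ by your counting. The crucial extra idea, absent from your proposal, is how to iterate: after removing $\overline{E}$ and its supersets, the remaining complex is re-encoded as a complex of the \emph{same} type by slightly decreasing the weights $a_e$ of the edges outside $\overline{E}$ and lowering the threshold from $n$ to $\overline{n}$; this forces one to prove the theorem in a weighted generality (edge weights $\mathbf{a}$ and vertex weights $\mathbf{b}$), with the vertex weights also perturbed to guarantee unique dual optima, and then induct on $|X|$. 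Without some such device for keeping the residual complex inside the inductive framework, your scheme cannot be completed as stated.
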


Note that the condition $n>1$ only serves to avoid a trivial case: when $0<n \le 1$ our complex is just $\{ \emptyset \}$. Before proving this theorem, we show how Theorem~\ref{hyper} follows from it via Proposition~\ref{blow} and Theorem~\ref{km}.

\begin{proof}[Proof of Theorem~\ref{hyper}]
  Let $E_1, \ldots , E_{\lceil rn \rceil}$ be sets of edges of size $r$ such that $\nu^*(E_i) \ge n$ for $i=1, \ldots , \lceil rn \rceil$. Assume for the sake of contradiction that there is no rainbow fractional matching of size $n$. Clearly, we must have $n>1$.

  We shall apply Theorem~\ref{km} to a simplicial complex and a matroid on the set $\tilde{E}$ consisting of all edges in $E = \bigcup_{i=1}^{\lceil rn \rceil} E_i$ labeled by the sets they appear in, i.e.,
  \[
    \tilde{E} = \left\{(e,i):\, e \in E_i\right\}.
  \]
  The simplicial complex on $\tilde{E}$ that we consider is
  \[
    \tilde{X} = \{\tilde{E}' \subseteq \tilde{E}:\, \nu^*(\{e:\, \exists i \text{ s.t. } (e,i) \in \tilde{E}'\}) < n\}.
  \]
  Note that the complex $\tilde{X}$ is obtained from the complex $X$ of Theorem~\ref{collapsible} by a blowing up construction as in Proposition~\ref{blow}. Since, by Theorem~\ref{collapsible}, $X$ is $(\lceil rn \rceil - 1)$-collapsible, it follows by Proposition~\ref{blow} that so is $\tilde{X}$.

  We consider the partition matroid $M$ on $\tilde{E}$ with parts corresponding to $E_1, \ldots , E_{\lceil rn \rceil}$, i.e.,
  \[
    M = \{\tilde{E}' \subseteq \tilde{E}:\, |\tilde{E}' \cap (E_i \times \{i\})| \le 1, \,\,i=1, \ldots , \lceil rn \rceil\}.
  \]
  Our assumption that there is no rainbow fractional matching of size $n$ means that $M \subseteq \tilde{X}$. As $\tilde{X}$ is $(\lceil rn \rceil - 1)$-collapsible, and hence ($\lceil rn \rceil - 1$)-Leray, it follows from Theorem~\ref{km} that there exists $\tilde{E}' \in \tilde{X}$ such that $\rho(\tilde{E} \setminus \tilde{E}') \le \lceil rn \rceil - 1$. The latter means that $\tilde{E} \setminus \tilde{E}'$ entirely misses one of the parts in the partition. Thus, there exists $i$ such that $E_i \times \{i\} \subseteq \tilde{E}'$, which is impossible because $\nu^*(E_i) \ge n$ and $\tilde{E}' \in \tilde{X}$.
\end{proof}

 We prove below a generalization of Theorem~\ref{collapsible} which allows for edge and vertex weights. To explain the role of these weights in facilitating the proof, we note that we were inspired by Wegner's \cite{We} proof that the nerve of every finite family of convex sets in $\mathbb{R}^d$ is $d$-collapsible. Our proof essentially follows his sliding hyperplane argument, but instead of using the intersections of the convex sets to guide the choice of collapse moves, we use the values of $\nu^*$ on subsets of $E$. Wegner's generic choice of the direction in which the hyperplane moves is achieved here by slightly perturbing the vertex weights. Where he slightly modifies the convex sets in order to apply induction, we slightly perturb the edge weights.

Let $E$ be a set of edges on a vertex set $V$. Let $\mathbf{a} = \{a_e\}_{e \in E}$ and $\mathbf{b} = \{b_v\}_{v \in V}$ be systems of positive real weights. For any $E' \subseteq E$ we consider
\begin{eqnarray*}
  \nu^*_{\mathbf{a},\mathbf{b}}(E') \,\ = & \max & \sum_{e \in E'} a_e f(e) \\
  & \text{ s.t. } & \sum_{e \ni v} f(e) \le b_v \quad \forall v \in V \\
  & & f(e) \ge 0 \quad \forall e \in E'
\end{eqnarray*}
By linear programming duality, the above value is equal to
\begin{eqnarray*}
  \tau^*_{\mathbf{a},\mathbf{b}}(E') \,\ = & \min & \sum_{v \in V} b_v g(v) \\
  & \textrm{ s.t. } & \sum_{v \in e} g(v) \ge a_e \quad \forall e \in E' \\
  & & g(v) \ge 0 \quad \forall v \in V
\end{eqnarray*}
The case where all $a_e$ and all $b_v$ are equal to $1$ gives the standard fractional matching and covering numbers.

The following extends Theorem~\ref{collapsible} to the weighted set-up.
\begin{theorem} \label{weighted}
Let $r \ge 2$ be an integer, and let $E$ be the set of edges of some $r$-uniform hypergraph on a vertex set $V$. Let $\mathbf{a} = \{a_e\}_{e \in E}$ and $\mathbf{b} = \{b_v\}_{v \in V}$ be systems of positive real weights, and write $\underline{a} = \min_{e \in E} a_e$ and $\underline{b} = \min_{v \in V} b_v$. Let $n > \underline{a} \underline{b}$ be a real number. The simplicial complex $X = X_{\mathbf{a}, \mathbf{b}, n}$ on $E$, defined by $X = \{E' \subseteq E:\, \nu^*_{\mathbf{a},\mathbf{b}}(E') < n\}$, is $(\lceil \frac{rn}{\underline{a} \underline{b}} \rceil - 1)$-collapsible.
\end{theorem}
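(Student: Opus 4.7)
My plan is to imitate Wegner's \cite{We} sliding-hyperplane proof that the nerve of a convex family in $\mathbb{R}^d$ is $d$-collapsible, with the dual covering functional $\tau^*_{\mathbf{a},\mathbf{b}}$ playing the role of Wegner's generic linear functional. Set $d := \lceil rn/(\underline{a}\underline{b})\rceil - 1$; since $n > \underline{a}\underline{b}$ strictly, $d$ is preserved under sufficiently small perturbations of $\mathbf{a}$ and $\mathbf{b}$. First I would perturb both weight systems to reach general position: for every $E' \subseteq E$ the dual LP computing $\tau^*_{\mathbf{a},\mathbf{b}}(E')$ has a unique optimum $g_{E'}$, attained at a non-degenerate vertex. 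Write $T_{E'} := \{e \in E' : \sum_{v \in e} g_{E'}(v) = a_e\}$ and $S_{E'} := \mathrm{supp}(g_{E'})$; non-degeneracy forces $|T_{E'}| = |S_{E'}|$.

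The key quantitative estimate is that $|T_F| \le d$ for every facet $F$ of $X$. Let $f^*$ be the unique primal optimum for $F$; by complementary slackness $\mathrm{supp}(f^*) = T_F$ and $\sum_{e \ni v} f^*(e) = b_v$ for every $v \in S_F$. Double counting with the $r$-uniformity of $E$ yields
\[
|S_F|\,\underline{b} \le \sum_{v \in S_F} b_v = \sum_{v \in S_F} \sum_{e \ni v} f^*(e) \le r \sum_{e \in E} f^*(e) \le \frac{r}{\underline{a}} \sum_{e \in E} a_e f^*(e) = \frac{r\,\nu^*(F)}{\underline{a}} < \frac{rn}{\underline{a}},
\]
so $|T_F| = |S_F| < rn/(\underline{a}\underline{b})$, and hence $|T_F| \le d$ as desired.

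Having identified the collapsing faces $T_F$, I would enumerate the facets of $X$ as $F_1, F_2, \ldots$ in decreasing order of $\tau^*(F_i)$, and at step $i$ perform an elementary $d$-collapse using $\sigma_i := T_{F_i}$. Legality requires $\sigma_i$ to sit inside a unique facet of the current subcomplex. Suppose $\sigma_i \subseteq F'$ for another facet $F'$: by the ordering $\tau^*(F') \le \tau^*(F_i)$; conversely, since $g_{F_i}$ covers $\sigma_i$ and complementary slackness gives $\tau^*(T_{F_i}) = \tau^*(F_i)$, monotonicity of $\tau^*$ yields $\tau^*(F') \ge \tau^*(\sigma_i) = \tau^*(F_i)$. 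With equality throughout, both $g_{F_i}$ and $g_{F'}$ optimally cover $\sigma_i$; by perturbation-uniqueness $g_{F_i} = g_{F'}$, forcing $F' = \{e \in E : \sum_{v \in e} g_{F_i}(v) \ge a_e\} = F_i$.

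The hard part is making this uniqueness argument work across all iterations: after some collapses, new subfacets may emerge (e.g.\ $F_j \setminus \{e\}$ for $e \in T_{F_j}$), and it is not immediately clear that they are governed by the same $\tau^*$-ordering as the original facets. I plan to handle this via further slight perturbations of the edge weights between iterations---precisely the role the paper flags for $\mathbf{a}$-perturbations---recasting each intermediate subcomplex as $X_{\mathbf{a}^{(i)},\mathbf{b},n}$ for a slightly adjusted $\mathbf{a}^{(i)}$ with $\underline{a}^{(i)} \ge \underline{a}$, thereby preserving both the collapsibility bound $d$ and the hypotheses needed for the sliding-functional argument, which can then be reapplied to the remaining complex.
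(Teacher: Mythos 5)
Your first collapse step is sound, and your two key observations---the bound $|T_F|\le \lceil rn/(\underline{a}\underline{b})\rceil-1$ via double counting, and the use of a perturbation making the dual optimum unique to show that the chosen face lies in a unique facet---are exactly the engine of the paper's Claims 1 and 2. The genuine gap is the one you flag yourself, and your proposed repair does not work. After the first collapse the current complex acquires facets that are not facets of $X$ (remnants such as $F_1\setminus\{e\}$ with $e\in T_{F_1}$), and for these the inequality ``$\tau^*(F')\le\tau^*(F_i)$ by the ordering'' is simply unavailable; indeed $T_{F_i}$ may well be contained in an earlier facet $F_j$ with $\tau^*(F_j)>\tau^*(F_i)$, whose surviving remnants then ruin uniqueness. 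Your fix is to recast each intermediate complex as $X_{\mathbf{a}^{(i)},\mathbf{b},n}$ for a \emph{slightly} adjusted $\mathbf{a}^{(i)}$, keeping the threshold $n$. This is impossible: since $X$ is finite, $\overline{n}:=\max_{E'\in X}\nu^*_{\mathbf{a},\mathbf{b}}(E')<n$, and $\nu^*$ depends continuously on the edge weights, so for every sufficiently small perturbation $\mathbf{a}'$ one has $\nu^*_{\mathbf{a}',\mathbf{b}}(E')<n$ for all $E'\in X$, hence $X_{\mathbf{a}',\mathbf{b},n}\supseteq X$, which strictly contains the intermediate complex. In other words, the very stability that lets one perturb weights without changing the complex also prevents a small perturbation from encoding the complex obtained after a collapse at the same threshold $n$.

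The missing idea is to change the threshold as well, and to choose the collapsed face more carefully so that the re-expression is exact. The paper inducts on $|X|$ and performs a \emph{single} elementary collapse at $\overline{E}$, an inclusion-minimal set attaining $\nu^*_{\mathbf{a},\mathbf{b}}(\overline{E})=\overline{n}$ (not at the tight set $T_F$ of the top facet, which need not be inclusion-minimal). It then shows that the remaining complex equals $X_{\widehat{\mathbf{a}},\mathbf{b},\overline{n}}$, where $\widehat{a}_e=a_e$ on $\overline{E}$ and $\widehat{a}_e=a_e-\varepsilon$ off $\overline{E}$: sets outside $X$ stay excluded because $\overline{n}<n$; supersets of $\overline{E}$ in $X$ have $\nu^*_{\widehat{\mathbf{a}},\mathbf{b}}\ge\overline{n}$; and for $E'\in X$ with $E'\not\supseteq\overline{E}$, any fractional matching of $\widehat{\mathbf{a}}$-value at least $\overline{n}$ must use an edge outside $\overline{E}$ (here the inclusion-minimality of $\overline{E}$ is essential), so its $\mathbf{a}$-value would exceed $\overline{n}$, contradicting maximality of $\overline{n}$ over $X$. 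One then checks $\overline{n}>\widehat{\underline{a}}\,\underline{b}$ and $r\overline{n}/(\widehat{\underline{a}}\,\underline{b})\le rn/(\underline{a}\,\underline{b})$ and applies the induction hypothesis to the smaller complex. If you replace your global $\tau^*$-ordering by this one-collapse-then-recurse scheme, with the collapsed face taken inclusion-minimal and the threshold lowered to $\overline{n}$, your argument goes through; as proposed, the iteration step fails.
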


\begin{proof}
  We proceed by induction on $|X|$. If $X=\{\emptyset\}$ there is nothing to show, so we assume that $|X|>1$.

  We will assume that for any $E' \subseteq E$ there is a unique function $g$ on $V$ that attains the minimum in the program defining $\tau^*_{\mathbf{a},\mathbf{b}}(E')$. Indeed, we can achieve this by slightly perturbing the vertex weights $\mathbf{b}$. If the perturbation is small enough and does not decrease any $b_v$, it does not affect the complex $X = X_{\mathbf{a},\mathbf{b}, n}$, nor the value of $\lceil \frac{rn}{\underline{a}\underline{b}} \rceil - 1$ (due to rounding up).

  Let $\overline{n} < n$ be defined by $\overline{n} = \max_{E' \in X} \nu^*_{\mathbf{a},\mathbf{b}}(E')$, and let $\overline{E} \in X$ be a set of edges which attains this maximum, and is inclusion-minimal among such sets. We will show that removing $\overline{E}$ and all its supersets from $X$ is an elementary ($\lceil \frac{rn}{\underline{a}\underline{b}} \rceil - 1$)-collapse, which leaves a subcomplex of $X$ to which induction may be applied. This is done in the following three claims.

  \medskip

  \begin{claim}
    $\overline{E}$ is contained in a unique facet of $X$.
  \end{claim}

  Let $E^+ = \{e \in E \setminus \overline{E}:\, \overline{E} \cup \{e\} \in X\}$. Let $e$ be any edge in $E^+$. By the maximality of $\overline{n}$, we have $\nu^*_{\mathbf{a},\mathbf{b}}(\overline{E} \cup \{e\}) = \nu^*_{\mathbf{a},\mathbf{b}}(\overline{E}) = \overline{n}$, and hence $\tau^*_{\mathbf{a},\mathbf{b}}(\overline{E} \cup \{e\}) = \tau^*_{\mathbf{a},\mathbf{b}}(\overline{E}) = \overline{n}$. By our assumption above, there is a unique function $g$ on $V$ that attains the minimum defining $\tau^*_{\mathbf{a},\mathbf{b}}(\overline{E})$, so this function must also satisfy the constraint $\sum_{v \in e} g(v) \ge a_e$ for the extra edge $e$. As this is true for each $e \in E^+$, the same function $g$ satisfies the constraints for all edges in $\overline{E} \cup E^+$, and therefore $\tau^*_{\mathbf{a},\mathbf{b}}(\overline{E} \cup E^+) = \overline{n}$ as well, implying that $\overline{E} \cup E^+ \in X$. Thus, $\overline{E} \cup E^+$ is the unique facet of $X$ that contains $\overline{E}$.

  \medskip

  \begin{claim} \label{claim2}
    $|\overline{E}| \le \lceil \frac{rn}{\underline{a}\underline{b}} \rceil - 1$.
  \end{claim}

  Consider the space $\mathbb{R}^{\overline{E}}$ of real-valued functions $f$ defined on $\overline{E}$. The constraints $\sum_{e \ni v} f(e) \le b_v$ for $v \in V$ and $f(e) \ge 0$ for $e \in \overline{E}$ define a polytope $P$ in $\mathbb{R}^{\overline{E}}$, and the maximum of $\sum_{e \in \overline{E}} a_e f(e)$ over $P$ equals $\overline{n}$. Hence there exists a vertex $\overline{f}$ of $P$ at which the maximum is attained, i.e., $\sum_{e \in \overline{E}} a_e \overline{f}(e) = \overline{n}$. Any vertex of $P$ must satisfy at least $|\overline{E}|$ of the constraints defining $P$ as equalities. However, if $\overline{f}(e)=0$ for some $e \in \overline{E}$, then $\nu^*_{\mathbf{a},\mathbf{b}}(\overline{E} \setminus \{e\}) = \nu^*_{\mathbf{a},\mathbf{b}}(\overline{E}) = \overline{n}$, contradicting the choice of $\overline{E}$ as inclusion-minimal. Therefore, we must have $\sum_{e \ni v} \overline{f}(e) = b_v$ for all $v \in U$, where $U$ is some subset of $V$ of size $|\overline{E}|$. Now \[|U|\underline{b} \le \sum_{v \in U} b_v = \sum_{v \in U} \sum_{e \ni v} \overline{f}(e) = \sum_{e \in \overline{E}} \sum_{v \in e \cap U} \overline{f}(e) \le r \sum_{e \in \overline{E}} \overline{f}(e) \le \frac{r}{\underline{a}} \sum_{e \in \overline{E}} a_e \overline{f}(e) = \frac{r \overline{n}}{\underline{a}}.\]
Thus $|\overline{E}| = |U| \le \frac{r \overline{n}}{\underline{a}\underline{b}} < \frac{rn}{\underline{a}\underline{b}}$, and since $|\overline{E}|$ is an integer it is at most $\lceil \frac{rn}{\underline{a}\underline{b}} \rceil - 1$.

  \medskip

  \begin{claim}
    Let $\widehat{X} = \{E' \in X:\, E' \nsupseteq \overline{E}\}$ be the remaining subcomplex of $X$. Then either $\widehat{X} = \{\emptyset\}$, or there exists a system of positive real edge weights $\widehat{\mathbf{a}} = \{\widehat{a}_e\}_{e \in E}$ so that $\widehat{X}$ is the complex corresponding to $\widehat{\mathbf{a}}, \mathbf{b}, \overline{n}$, i.e., \[\widehat{X} = \{E' \subseteq E:\, \nu^*_{\widehat{\mathbf{a}},\mathbf{b}}(E') < \overline{n}\},\] and the inequalities $\overline{n} > \widehat{\underline{a}} \underline{b}$, $\frac{r \overline{n}}{\widehat{\underline{a}}\underline{b}} \le \frac{rn}{\underline{a}\underline{b}}$ hold.
  \end{claim}

  We are going to show that for sufficiently small $\varepsilon > 0$, the edge weights $\widehat{\mathbf{a}} = \{ \widehat{a}_e\}_{e \in E}$ defined by
  \[
    \widehat{a}_e = \begin{cases}
      a_e & \text{if } e \in \overline{E} \\
      a_e - \varepsilon & \text{if } e \notin \overline{E}
    \end{cases}
  \]
  satisfy the requirements of the claim. To show that $\widehat{X} = \{E' \subseteq E:\, \nu^*_{\widehat{\mathbf{a}},\mathbf{b}}(E') < \overline{n}\}$, we consider three kinds of subsets $E' \subseteq E$. If $E' \notin X$ (and so $E' \notin \widehat{X}$), we know that $\nu^*_{\mathbf{a},\mathbf{b}}(E') \ge n$, and therefore $\nu^*_{\widehat{\mathbf{a}},\mathbf{b}}(E') > \overline{n}$ for small enough $\varepsilon$, since $\overline{n} < n$. If $E' \in X$ but $E' \supseteq \overline{E}$ (and so $E' \notin \widehat{X}$), we have $\nu^*_{\widehat{\mathbf{a}},\mathbf{b}}(E') \ge \nu^*_{\widehat{\mathbf{a}},\mathbf{b}}(\overline{E}) = \nu^*_{\mathbf{a},\mathbf{b}}(\overline{E}) = \overline{n}$. For the third kind, suppose that $E' \in X$ and $E' \nsupseteq \overline{E}$ (and so $E' \in \widehat{X}$), and assume for the sake of contradiction that $\nu^*_{\widehat{\mathbf{a}},\mathbf{b}}(E') \ge \overline{n}$. Let $f$ be a function on $E'$ which satisfies the constraints of the program defining $\nu^*_{\widehat{\mathbf{a}},\mathbf{b}}(E')$ and gives $\sum_{e \in E'} \widehat{a}_e f(e) \ge \overline{n}$. The support of $f$ cannot be contained in $E' \cap \overline{E}$, because the latter is a proper subset of $\overline{E}$, and this would contradict the choice of $\overline{E}$ as inclusion-minimal. Hence there exists an edge $e \in E' \setminus \overline{E}$ with $f(e) > 0$, and therefore $\sum_{e \in E'} a_e f(e) > \sum_{e \in E'} \widehat{a}_e f(e) \ge \overline{n}$, contradicting the maximality of $\overline{n}$. This completes the proof that $\widehat{X} = \{E' \subseteq E:\, \nu^*_{\widehat{\mathbf{a}},\mathbf{b}}(E') < \overline{n}\}$. Since we may assume that $\widehat{X} \ne \{\emptyset\}$, and clearly $\nu^*_{\widehat{\mathbf{a}},\mathbf{b}}(\{e\}) \ge \widehat{\underline{a}}\underline{b}$ for any single edge, it follows that $\overline{n} > \widehat{\underline{a}}\underline{b}$. Finally, since $\overline{n} < n$, choosing $\varepsilon$ small enough when defining $\widehat{\mathbf{a}}$ guarantees that $\frac{r \overline{n}}{\widehat{\underline{a}}\underline{b}} \le \frac{rn}{\underline{a}\underline{b}}$ holds.

  \medskip

  Applying the induction hypothesis to $\widehat{X}$ completes the proof of Theorem~\ref{weighted}.
\end{proof}

Having proved Theorem~\ref{hyper}, we now indicate how to get the improvement stated in Theorem~\ref{partite} for the $r$-partite case with integer $n$.

\begin{proof}[Proof of Theorem~\ref{partite}]
  The proof follows the same line, except that in the $r$-partite case the conclusion of Theorem~\ref{weighted} becomes: $X$ is $(r \lfloor \frac{\overline{n}}{\underline{a}\underline{b}} \rfloor)$-collapsible, where $\overline{n} = \max_{E' \in X} \nu^*_{\mathbf{a},\mathbf{b}}(E')$ as defined in the original proof.

  To establish the corresponding bound in Claim~\ref{claim2}, we decompose the set of vertices $U$ for which $\sum_{e \ni v} \overline{f}(e) = b_v$ holds, as $U = \bigcup_{i=1}^r U_i$, where $U_i$ is the intersection of $U$ with the $i$-th part of the vertex set. Then we can write for each $i$:\[|U_i|\underline{b} \le \sum_{v \in U_i} b_v = \sum_{v \in U_i} \sum_{e \ni v} \overline{f}(e) = \sum_{e \in \overline{E}} \sum_{v \in e \cap U_i} \overline{f}(e) \le \sum_{e \in \overline{E}} \overline{f}(e) \le \frac{1}{\underline{a}} \sum_{e \in \overline{E}} a_e \overline{f}(e) = \frac{\overline{n}}{\underline{a}}.\] Thus $|U_i| \le \lfloor \frac{\overline{n}}{\underline{a}\underline{b}} \rfloor$, and summing these inequalities for $i=1, \ldots , r$ we obtain $|\overline{E}| = |U| \le r \lfloor \frac{\overline{n}}{\underline{a}\underline{b}} \rfloor$.

  Just as in the original proof, we apply the induction hypothesis to the remaining subcomplex $\widehat{X}$, getting that $\widehat{X}$ is $(r \lfloor \frac{\widehat{\overline{n}}}{\widehat{\underline{a}}\underline{b}} \rfloor)$-collapsible, where $\widehat{\overline{n}} = \max_{E' \in \widehat{X}} \nu^*_{\widehat{\mathbf{a}},\mathbf{b}}(E')$. We have $\widehat{\overline{n}} < \overline{n}$ and therefore, by choosing $\varepsilon$ small enough when defining $\widehat{\mathbf{a}}$, we guarantee that $r \lfloor \frac{\widehat{\overline{n}}}{\widehat{\underline{a}}\underline{b}} \rfloor \le r \lfloor \frac{\overline{n}}{\underline{a}\underline{b}} \rfloor$, so the induction goes through.

  In the unweighted case, the result just proved shows that $X$ is $(r \lfloor \overline{n} \rfloor)$-collapsible. We assume in Theorem~\ref{partite} that $n$ is an integer, and clearly we may assume $n>1$. As $\overline{n} < n$ we have $\lfloor \overline{n} \rfloor \le n-1$, thus $X$ is $(rn-r)$-collapsible. Just as in the proof of Theorem~\ref{hyper}, this implies the conclusion of Theorem~\ref{partite} with $rn-r+1$ sets of edges.
\end{proof}

\section*{Acknowledgements}

We are grateful to Dani Kotlar, Roy Meshulam and Ran Ziv for helpful discussions.

\bibliographystyle{plain}
\bibliography{rainbow_fractional_matchings}

\end{document}